\documentclass[11pt,letterpaper,reqno]{amsart}
\usepackage[T1]{fontenc}
\usepackage{lmodern}
\usepackage{amsmath,amssymb,amsthm}
\usepackage[margin=1in,headheight=12pt,headsep=18pt]{geometry}
\usepackage{graphicx}
\usepackage{float}
\usepackage[hidelinks]{hyperref}
\hypersetup{pdftitle={A Counterexample to Three Conjectures of Luo},pdfauthor={Qilong Guo}}
\newtheorem{theorem}{Theorem}[section]
\newtheorem{lemma}[theorem]{Lemma}
\newtheorem{proposition}[theorem]{Proposition}
\theoremstyle{remark}
\newtheorem{remark}[theorem]{Remark}
\newcommand{\T}{\mathcal T}
\newcommand{\R}{\mathbb R}
\newcommand{\C}{\mathbb C}
\newcommand{\Z}{\mathbb Z}
\newcommand{\SAS}{\operatorname{SAS}}
\DeclareMathOperator{\vol}{vol}
\DeclareMathOperator{\supp}{supp}
\title{A Counterexample to Three Conjectures of Luo}
\author{Qilong Guo}
\date{}
\makeatletter
\@ifundefined{subjclassname@2020}{%
  \@namedef{subjclassname@2020}{\textup{2020} Mathematics Subject Classification}}{}
\makeatother
\subjclass[2020]{Primary 57K31; Secondary 57Q15, 57K10}
\keywords{3-manifold triangulation, normal surface, Thurston gluing equation,
quadrilateral coordinates, circle-valued angle structure, lens space}

\begin{document}
\begin{abstract}
We give a single counterexample to Conjectures 1--3 as stated in
Luo's paper \cite{Luo2011}: a minimal two-tetrahedron triangulation of
the lens space $L(3,1)$ with a null-homotopic loop edge.
The proof uses quadrilateral matching equations, an explicit
calculation of shape parameters, and known results on volume maximization.
\end{abstract}
\maketitle

\section{Introduction}
Luo proposed the following conjectures for a minimal triangulation
$\T$ of a closed $3$-manifold $M$ \cite[Conjectures 1--3]{Luo2011}.
A triangulation is minimal if it uses the fewest tetrahedra among
all triangulations of the same manifold.
As in \cite{Luo2011}, we allow triangulations obtained by affine
face pairings of tetrahedra, including self-identifications.
Here $M$ is assumed irreducible in (C1) and (C2), and orientable in
(C1) and (C3).
\begin{enumerate}
\item[(C1)] $\T$ supports either a solution to Thurston's equation
or a cluster of three 2-quad-type solutions.
\item[(C2)] If every maximum of $\vol\colon\SAS(\T)\to\R$ is
smooth, then $\T$ supports a solution to Thurston's equation.
\item[(C3)] If $\T$ has a null-homotopic edge whose endpoints agree,
then $\T$ supports a cluster of three 2-quad-type solutions.
\end{enumerate}
Here $\SAS(\T)$ is the space of circle-valued angle structures,
and $\vol$ is its volume function. These and the other definitions
are recalled in Sections~\ref{sec:quadrilaterals}
and~\ref{sec:conjectures}. We prove the following.

\begin{theorem}\label{thm:main}
There is a minimal two-tetrahedron triangulation $\T$ of $L(3,1)$
such that
\begin{enumerate}
\item[(i)] $\T$ has a null-homotopic loop edge;
\item[(ii)] $\T$ supports no cluster of three 2-quad-type solutions;
\item[(iii)] $\T$ supports no solution to Thurston's equation;
\item[(iv)] every maximum of $\vol\colon\SAS(\T)\to\R$ is smooth.
\end{enumerate}
Consequently, $\T$ is a counterexample to (C1), (C2), and (C3).
\end{theorem}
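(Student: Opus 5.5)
We will work with an explicit two-tetrahedron triangulation $\T$ of $L(3,1)$, given by face-pairing data for tetrahedra $\sigma_1,\sigma_2$. We would take the minimal layered triangulation of $L(3,1)$: it has one vertex, two tetrahedra and hence two edges (Euler characteristic zero forces $V-E+F-T=0$, so $1-E+4-2=0$ and $E=3$; the real edge count must be read off the gluing). Minimality is immediate, because no one-tetrahedron triangulation of $L(3,1)$ exists; the one-tetrahedron closed orientable manifolds are $S^3$, $L(4,1)$ and $L(5,2)$. Every edge is a loop, since there is a single vertex. For (i) we compute $\pi_1(M)\cong\Z/3$ from the 2-skeleton. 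The edges become generators, and each face gives a relation. We then pick an edge whose class is trivial, for example the edge of degree $2$ or the edge whose class is the cube of a generator. In the layered $L(3,1)$ there is an edge whose class equals $3$ times the generator, which is $0$.

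For (ii) and (iii) we write down Haken's quadrilateral matching equations. There is one equation per edge in the six quad coordinates $(q_1,q_2,q_3)$ for $\sigma_1$ and $(q_1',q_2',q_3')$ for $\sigma_2$. A 2-quad-type solution is a solution supported on at most two quad types, and a cluster of three is a specific triple with prescribed supports (as recalled in Section~\ref{sec:conjectures}). We enumerate all pairs of quad types, $15$ choices, and solve the restricted linear system in each case. This determines exactly which 2-quad-type solutions exist. We then check that no three of them fit the cluster pattern.

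For Thurston's equation, we set shapes $z_i$ and $w_i$ with the usual relations $z_{i+1}=1/(1-z_i)$. The edge equations become products of shape parameters equal to $1$, with an argument condition summing to $2\pi$. In the layered $L(3,1)$ the degree pattern is highly constrained. We expect that substitution reduces the system to one polynomial equation in $z$, together with a second equation that forces $z\in\{0,1,\infty\}$ or a contradiction with the argument sums, so that no solution exists. An explicit calculation suffices here.

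For (iv) we identify $\SAS(\T)$ as a torus, cut out by the angle-sum equations modulo $2\pi$ inside $(\R/2\pi)^6$. The volume is the sum of Lobachevsky functions $\sum\Lambda(a_i)$. We invoke the known result (Luo's, recalled later) that a nonsmooth maximum point has some angle in $\{0,\pi\}$ modulo $2\pi$ and forces a 2-quad-type structure supported on a specific edge configuration. Alternatively, a maximum point with all angles outside $\pi\Z$ is smooth, and such a critical point gives a solution to Thurston's equation unless the volume is $0$. Since (iii) rules out such solutions, we must show instead that every maximum is smooth in the sense defined. Most likely every maximum occurs at a point where the volume is $0$ and the point lies in the smooth locus, or we show directly that the maximal value is achieved only at explicit points we can list. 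The main obstacle is this step: we must determine the global maxima of $\vol$ on a low-dimensional torus and verify smoothness there. We would first parametrize $\SAS(\T)$ explicitly by one or two angles, then maximize by elementary calculus using $\Lambda'(x)=-\log|2\sin x|$.
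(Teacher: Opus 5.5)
\paragraph{The gap is in (iv).} Your reasoning there rests on a false premise. You say a smooth critical point of $\vol$ gives a solution to Thurston's equation unless the volume is $0$. In Luo's framework a smooth maximum yields only a solution of the \emph{generalized} Thurston equation, with edge products $\pm1$. For this triangulation that equation is solvable (Remark~\ref{rmk:generalized-thurston}), so there is no tension between (iii) and (iv) to resolve.

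Your guess that the maxima have volume $0$ is also wrong. The point with angles $(2\pi/3,0,\pi/3)$ on $(x_1,x_2,x_3)$ and $(-2\pi/3,\pi/3,4\pi/3)$ on $(y_1,y_2,y_3)$ lies in $\SAS(\T)$ and has volume $3\Lambda(\pi/3)>0$.

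The missing idea is that (iv) follows from (ii) with no maximization at all. Suppose a non-smooth maximum existed.
\begin{itemize}
\item By the Futer--Gu\'eritaud theorem \cite[Theorem~4.4]{Luo2011}, you may choose one in which some tetrahedron has all three quadrilateral values in $\{1,-1\}$.
\item By \cite[Theorem~4.2(a)]{Luo2011}, each of those three quadrilaterals is nonzero in some 2-quad-type solution.
\item These three solutions form a cluster, contradicting (ii).
\end{itemize}
You gesture at Luo's result but never connect it to (ii). Both steps are needed: a single $\pm1$ value produces only one 2-quad-type solution, and that is no contradiction here, since one supported on $\{x_1,y_1\}$ exists (Lemma~\ref{lem:support}). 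Your fallback of maximizing by hand on the torus is left undone. It is delicate exactly where several angles lie in $\pi\Z$ at once and the logarithmic singularities of $\Lambda'$ can cancel, which is the case Luo's 2-quad analysis handles.

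\paragraph{The triangulation is not pinned down.} There is no single ``minimal layered triangulation'' of $L(3,1)$. Layering on the weight-three edge of the one-tetrahedron solid torus $\mathrm{LST}(1,2,3)$ gives a two-tetrahedron $\mathrm{LST}(1,1,2)$. Folding it along either weight-one boundary edge yields $L(3,1)$:
\begin{itemize}
\item Folding along the old edge gives the paper's $\T$: edge degrees $2,5,5$, equivalently two copies of $\mathrm{LST}(1,2,3)$ glued along their boundary tori.
\item Folding along the new edge gives edge degrees $1,2,9$. In its $\mathrm{LST}(1,2,3)$ tetrahedron, two quadrilaterals have both disjoint local edges on the degree-nine edge. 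So the third quadrilateral has a zero column in $W$ and the first two have opposite columns. That tetrahedron carries a cluster, and (ii) fails.
\end{itemize}
Your enumeration therefore proves nothing until you fix the right triangulation and carry it out. For the paper's $\T$, the column test on \eqref{eq:matching} leaves only multiples of $(1,0,0,1,0,0)^{\mathsf T}$.

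\paragraph{Smaller points.} In (iii), drop the argument condition. Luo's Thurston equation is purely multiplicative, so a contradiction drawn from argument sums would prove a weaker statement. Here the product equations suffice: the equation at $a$ gives $w=z^{-1}$, and then those at $b$ and $c$ reduce to $z^{-1}=1$ and $z=1$. Your edge count also briefly says ``two edges''; there are three.
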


\begin{remark}\label{rmk:scope}
These conclusions concern the conjectures as stated.
Since $L(3,1)$ is not simply
connected, the example does not address the restricted versions used
in Luo's proposed approach to the Poincar\'e conjecture.
\end{remark}

The paper is organized as follows. In Section~\ref{sec:triangulation},
we give the face pairings defining the triangulation of $L(3,1)$,
establish its minimality, and exhibit a null-homotopic loop edge.
In Section~\ref{sec:quadrilaterals}, we compute the signed
quadrilateral matching matrix and use it to exclude clusters of three
2-quad-type solutions. Section~\ref{sec:conjectures} completes the
proof of Theorem~\ref{thm:main}: Section~\ref{subsec:thurston}
establishes the nonexistence of solutions to Thurston's equation,
and Section~\ref{subsec:volume} proves the smoothness of every
volume maximizer.

\section{The two-tetrahedron triangulation}\label{sec:triangulation}
Take two ordered tetrahedra $A$ and $B$, each with vertex set
$\{0,1,2,3\}$. Write $A(ijk)$ for the face of $A$ with vertices
$i,j,k$ in the displayed order, and similarly for $B$. In each row
below, corresponding vertices are identified:
\begin{equation}\label{eq:pairings}
\begin{array}{ccc}
\text{first ordered face}&&\text{second ordered face}\\[2pt]
A(123)&\longleftrightarrow&A(230)\\
B(013)&\longleftrightarrow&B(120)\\
A(013)&\longleftrightarrow&B(321)\\
A(012)&\longleftrightarrow&B(320).
\end{array}
\end{equation}
Denote the quotient by $M$ and its induced triangulation by $\T$.
The same data are shown schematically in Figure~\ref{fig:pairings}.

\begin{figure}[H]
\centering
\includegraphics[width=0.48\textwidth]{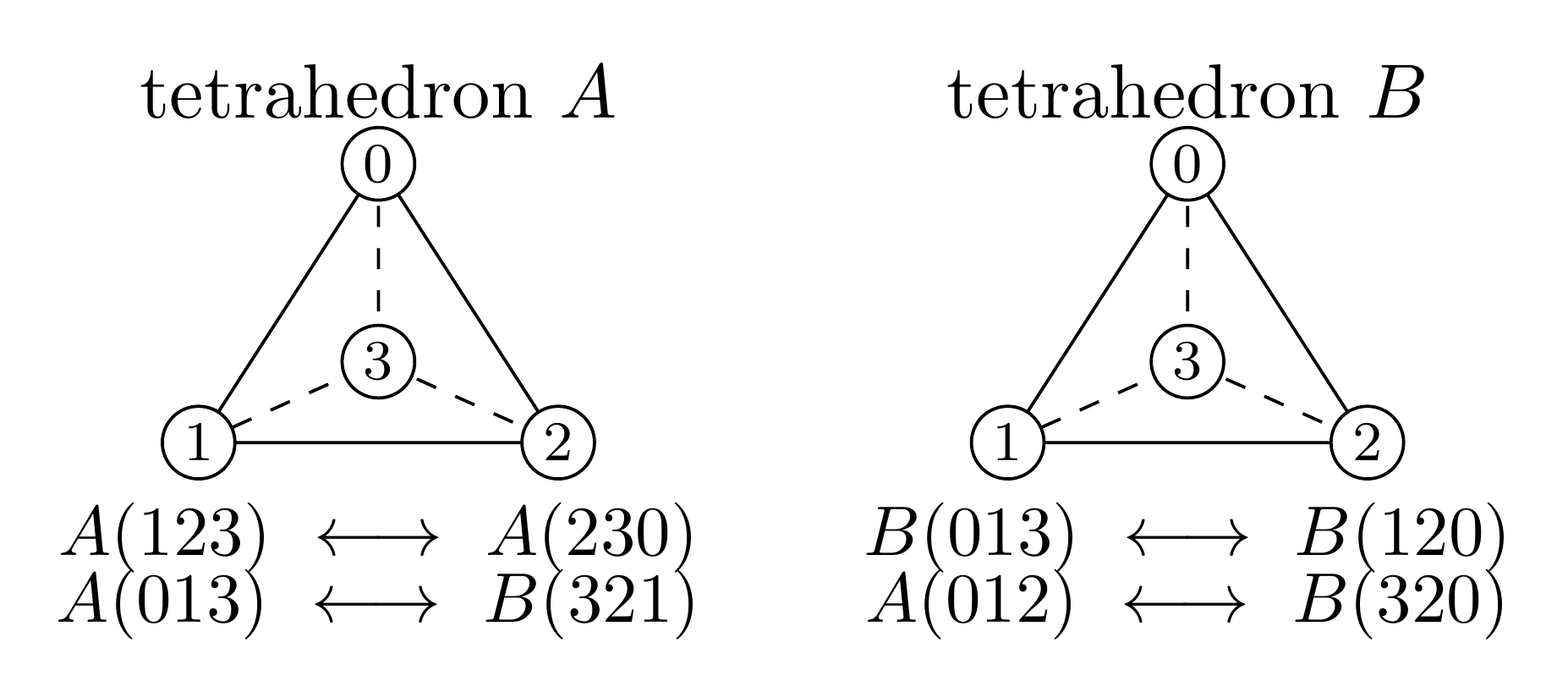}
\caption{The two ordered tetrahedra and the face pairings defining
the triangulation. Dashed segments are the edges hidden behind the
front face $012$. In $X(ijk)\leftrightarrow Y(i'j'k')$, the vertices
$i,j,k$ are identified with $i',j',k'$, respectively.}
\label{fig:pairings}
\end{figure}

The four face maps in \eqref{eq:pairings} extend, in the order listed,
to the vertex permutations $1230$, $1230$, $3201$, and $3201$.
Each four-digit word lists the images of $0,1,2,3$, in that order.
Each is odd, so the local orientations of $A$ and $B$ are compatible
with an orientation of the quotient. The identifications put all eight
tetrahedral vertices in one class, and no local edge is identified with
itself with reversed orientation. The vertex link is connected and consists
of eight triangles, with twelve edges and six vertices after
identification. Its Euler characteristic is therefore $6-12+8=2$.
The link is a closed orientable surface, hence is $S^2$.
Thus the quotient is a closed orientable $3$-manifold with one vertex.

There are three global edges, denoted $a,b,c$. The six local edges
of each tetrahedron are identified as follows:
\begin{equation}\label{eq:edges}
\begin{array}{c|cccccc}
 &01&02&03&12&13&23\\\hline
A&a&b&c&c&b&c\\
B&b&c&b&b&c&a
\end{array}
\end{equation}
Hence the degrees of $a,b,c$, counting local edge occurrences,
are $2,5,5$, respectively.
Since there is one vertex, the oriented edges $a,b,c$ generate
$\pi_1(M)$. Orient them by $A(01)$, $A(02)$, and $A(03)$,
respectively, with each local edge directed from its first vertex to
its second. Reading the boundaries of $A(123)$, $A(013)$, $A(012)$,
and $B(013)$ gives the presentation
\begin{equation}\label{eq:pi1}
\pi_1(M)=\langle a,b,c\mid
c^{-2}b,\ ab^{-1}c^{-1},\ ac^{-1}b^{-1},\ bc^{-1}b\rangle.
\end{equation}
The first and fourth relations give $b=c^2$ and $c=b^2$, while the
second gives $a=cb$. Substitution yields
\begin{equation}\label{eq:group}
\pi_1(M)\cong\langle c\mid c^3=1\rangle\cong\Z/3,
\qquad a=c^3=1.
\end{equation}
The face pairings in \eqref{eq:pairings} give the two-tetrahedron
triangulation of $L(3,1)$ shown in
\cite[Figure 3(b); see also Proposition 11(5b)]{JRT}.
To identify the diagrams explicitly, take $A$ and $B$ to be the left
and right tetrahedra in that figure. Their vertices $(0,1,2,3)$ occupy
the positions (top, bottom, right, left) and (left, right, bottom, top),
respectively. The edges $a,b,c$ correspond to the edges marked with
one, three, and two arrowheads, respectively.
Equation~\eqref{eq:group} also shows that the loop edge $a$ is
null-homotopic.

The minimum number of tetrahedra in a triangulation of $L(3,1)$
is two \cite[Introduction]{JRT}. Thus $\T$ is minimal.

\section{The quadrilateral matching calculation}\label{sec:quadrilaterals}
A normal surface meets each tetrahedron in triangular and
quadrilateral disks, whose counts are its normal coordinates.
A normal arc in a triangular face joins two distinct edges; its
type is determined by the vertex it cuts off.
Haken's matching equations equate the total
disk coordinates contributing to each arc type on the two sides
of a paired face. We consider real solutions $z$ of this homogeneous
linear system and write $z(q)$ for the coordinate of quadrilateral
type $q$. Such a solution is of
\emph{2-quad type} if exactly one or two of its quadrilateral
coordinates are nonzero. A \emph{cluster of three 2-quad-type
solutions} consists of three such solutions $z_1,z_2,z_3$, not
necessarily distinct, and a tetrahedron $\sigma$ with three distinct quadrilateral
types $q_1,q_2,q_3$ such that
\[
z_i(q_i)\ne0\qquad(i=1,2,3).
\]
We impose neither nonnegativity nor the restriction to one
quadrilateral type per tetrahedron.

In $A$, denote the quadrilateral types disjoint from the opposite-edge pairs
\[
01/23,\qquad 02/13,\qquad 03/12
\]
by $x_1,x_2,x_3$, respectively. Use $y_1,y_2,y_3$ for the
corresponding types in $B$. Give both tetrahedra the orientations
determined by their vertex orders and use the cyclic orders
\[
x_1\longrightarrow x_2\longrightarrow x_3\longrightarrow x_1,
\qquad
y_1\longrightarrow y_2\longrightarrow y_3\longrightarrow y_1.
\]
Let $Q(\T)$ be the set of all six quadrilateral types, and write
$q\subset\sigma$ when $q$ belongs to the tetrahedron $\sigma$.
To record the cyclic order, set $w(q,q')=1$ if $q\to q'$ and
$w(q,q')=-1$ if $q'\to q$; set it to zero otherwise.
Let $i(e,q)\in\{0,1,2\}$ count how many of the two local edges
disjoint from $q$ are identified with the global edge $e$,
counting each local edge separately.
Following Luo \cite[Lemma 3.2 and equation (3.9)]{Luo2011},
the \emph{quadrilateral matching equation} (or \emph{$Q$-matching
equation}) at $e$ for a quadrilateral coordinate vector
$v=(v(q))_{q\in Q(\T)}$ is
\[
\sum_{q\in Q(\T)}W(e,q)v(q)=0,
\qquad
W(e,q)=\sum_{q'\in Q(\T)}i(e,q')w(q',q).
\]
Write $W=(W(e,q))$, with rows ordered as $a,b,c$ and columns as
$x_1,x_2,x_3,y_1,y_2,y_3$. For any solution $z$ of Haken's equations,
its \emph{quadrilateral projection} is the column vector
$v=(z(q))_{q\in Q(\T)}$, which satisfies $Wv=0$ \cite{Tollefson}.
With $I=(i(e,q))$ and $\Omega=(w(q,q'))$, these orders give
\[
I=
\begin{pmatrix}
1&0&0&1&0&0\\
0&2&0&1&0&2\\
1&0&2&0&2&0
\end{pmatrix},
\qquad
\Omega=\operatorname{diag}(J,J),
\quad
J=\begin{pmatrix}0&1&-1\\-1&0&1\\1&-1&0\end{pmatrix}.
\]
Hence
\begin{equation}\label{eq:matching}
W=I\Omega=\begin{pmatrix}
0&1&-1&0&1&-1\\
-2&0&2&2&-1&-1\\
2&-1&-1&-2&0&2
\end{pmatrix}.
\end{equation}
For a quadrilateral coordinate vector $v$, write
$\supp(v)=\{q\in Q(\T):v(q)\ne0\}$ for its support.
Thus $\supp(v)$ records exactly those quadrilateral types whose
coordinates in $v$ are nonzero.

\begin{lemma}\label{lem:support}
Let $v\ne0$ satisfy $Wv=0$ and have at most two nonzero
coordinates. Then $\supp(v)=\{x_1,y_1\}$. In fact, $v$ is a
nonzero multiple of $(1,0,0,1,0,0)^{\mathsf T}$.
In particular, $\T$ supports no cluster of three 2-quad-type solutions.
\end{lemma}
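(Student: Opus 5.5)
The plan is to reduce the statement to a finite check on the columns of the matrix $W$ in \eqref{eq:matching}. Write $W_q$ for the column of $W$ indexed by $q\in Q(\T)$. If $v\ne0$ has $\supp(v)\subseteq\{q,q'\}$, then $Wv=0$ reads $v(q)W_q+v(q')W_{q'}=0$. So the task is to decide which single columns vanish and which pairs of columns are linearly dependent.

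Reading off \eqref{eq:matching}, the columns are
\[
W_{x_1}=(0,-2,2),\quad W_{x_2}=(1,0,-1),\quad W_{x_3}=(-1,2,-1),
\]
\[
W_{y_1}=(0,2,-2),\quad W_{y_2}=(1,-1,0),\quad W_{y_3}=(-1,-1,2).
\]
\begin{enumerate}
\item[(1)] No column is zero. Hence $|\supp(v)|=1$ is impossible, and in the two-element case both coordinates of $v$ on the support are nonzero.
\item[(2)] If two nonzero vectors are parallel, they have zeros in the same positions. The zero patterns are:
\begin{enumerate}
\item[$\bullet$] $x_1$ and $y_1$ vanish in the first entry;
\item[$\bullet$] $x_2$ vanishes in the second entry;
\item[$\bullet$] $y_2$ vanishes in the third entry;
\item[$\bullet$] $x_3$ and $y_3$ have no zeros.
\end{enumerate}
So the only candidate pairs are $\{x_1,y_1\}$ and $\{x_3,y_3\}$.
\item[(3)] The pair $\{x_3,y_3\}$ is excluded because $(-1,2,-1)$ and $(-1,-1,2)$ are not proportional. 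For the other pair, $W_{x_1}=-W_{y_1}$. Therefore the solutions are exactly the nonzero multiples of $(1,0,0,1,0,0)^{\mathsf T}$, which have support $\{x_1,y_1\}$.
\end{enumerate}

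For the final assertion, suppose there were a cluster $z_1,z_2,z_3$ with tetrahedron $\sigma$ and distinct types $q_1,q_2,q_3\subset\sigma$. Let $v_i$ be the quadrilateral projection of $z_i$. By \cite{Tollefson}, as recalled above, $Wv_i=0$. Each $v_i$ has at most two nonzero coordinates because $z_i$ is of 2-quad type, and $v_i\ne0$ because $z_i(q_i)\ne0$. The first part of the lemma then gives $q_i\in\supp(v_i)=\{x_1,y_1\}$ for each $i$. But $\sigma$ contains only one element of $\{x_1,y_1\}$, so $q_1,q_2,q_3$ cannot be three distinct types in $\sigma$, a contradiction.

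There is no real obstacle here. The only point needing care is the translation from Haken solutions to their quadrilateral projections, which is supplied by the cited fact that these projections satisfy $Wv=0$.
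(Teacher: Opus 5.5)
Your proposal is correct and follows the paper's proof. Both arguments check that no column of $W$ vanishes and that the only proportional pair of columns is $W_{x_1}=-W_{y_1}$, then conclude that every 2-quad-type solution is supported on $\{x_1,y_1\}$, which contains only one type from each tetrahedron; your zero-pattern argument just spells out the proportionality check that the paper states without detail.
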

\begin{proof}
A kernel vector supported on one coordinate would give a zero
column of $W$, and one supported on two coordinates would give a
proportional pair of columns. In \eqref{eq:matching} no column is
zero, and the only proportional pair consists of the $x_1$- and
$y_1$-columns, which are negatives of one another. This proves
the assertion about $v$.

Let $z$ be a 2-quad-type solution and let $v$ be its quadrilateral
projection. Then $v\ne0$, $Wv=0$, and $v$ has at most two nonzero
coordinates, so the preceding calculation gives
$\supp(v)=\{x_1,y_1\}$. Thus
every 2-quad-type solution vanishes at $x_2,x_3,y_2,y_3$, so
neither tetrahedron supplies the three distinguished quadrilateral
types required by a cluster. Hence no such cluster exists.
\end{proof}

\section{The three conjectures}\label{sec:conjectures}
We now establish the remaining properties in Theorem~\ref{thm:main}.

\subsection{Thurston's equation}\label{subsec:thurston}
We use the quadrilateral types $Q(\T)$ and incidence numbers
$i(e,q)$ defined in Section~\ref{sec:quadrilaterals}; the latter
are the entries of the matrix $I$.

A solution to Thurston's equation \cite[Section 3.2]{Luo2011}
is a shape assignment
\[
\zeta\colon Q(\T)\longrightarrow\C\setminus\{0,1\},
\]
where $\zeta(q)$ is the complex shape parameter associated to $q$.
For the three types $q,q',q''$ in a tetrahedron, listed in the
cyclic order $q\to q'\to q''\to q$ fixed in
Section~\ref{sec:quadrilaterals}, the parameters satisfy
\[
\bigl(\zeta(q),\zeta(q'),\zeta(q'')\bigr)
=\left(s,\frac{1}{1-s},\frac{s-1}{s}\right),
\qquad s=\zeta(q).
\]
In addition, for every global edge $e$ one requires
\[
\prod_{q\in Q(\T)}\zeta(q)^{i(e,q)}=1.
\]
The exponent records the multiplicity with which a shape parameter
occurs around $e$. These are the ordinary gluing equations; for the
triangulation at hand, the global edges are $a,b,c$ from
Section~\ref{sec:triangulation}.

\begin{proposition}\label{prop:thurston}
The triangulation $\T$ supports no solution to Thurston's equation.
\end{proposition}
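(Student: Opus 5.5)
The plan is to write out the gluing equations explicitly in terms of two complex parameters, one per tetrahedron, and show that they force a shape parameter to equal $1$, which is excluded. Set $s=\zeta(x_1)$ and $t=\zeta(y_1)$, with $s,t\in\C\setminus\{0,1\}$. By the cyclic orders $x_1\to x_2\to x_3\to x_1$ and $y_1\to y_2\to y_3\to y_1$ fixed in Section~\ref{sec:quadrilaterals}, the remaining parameters are
\[
\zeta(x_2)=\frac{1}{1-s},\quad \zeta(x_3)=\frac{s-1}{s},\quad
\zeta(y_2)=\frac{1}{1-t},\quad \zeta(y_3)=\frac{t-1}{t}.
\]

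Next, read the edge equations off the rows of the matrix $I$ in Section~\ref{sec:quadrilaterals}. Row $a$ is $(1,0,0,1,0,0)$, which gives $st=1$. Row $b$ is $(0,2,0,1,0,2)$, which gives
\[
\frac{1}{(1-s)^2}\cdot t\cdot\frac{(t-1)^2}{t^2}
=\frac{(t-1)^2}{t(1-s)^2}=1 .
\]
Row $c$, which is $(1,0,2,0,2,0)$, gives the symmetric equation $(s-1)^2/\bigl(s(1-t)^2\bigr)=1$. We will not need it, although it can serve as a consistency check.

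Now substitute $t=1/s$ from the $a$-equation into the $b$-equation. This gives
\[
(t-1)^2=\frac{(1-s)^2}{s^2},
\qquad
t(1-s)^2=\frac{(1-s)^2}{s},
\]
so the $b$-equation reduces to $1/s=1$, that is, $s=1$. This contradicts $\zeta(x_1)\in\C\setminus\{0,1\}$, so no solution exists.

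The only real obstacle is bookkeeping. We must make sure that the convention $(s,1/(1-s),(s-1)/s)$ is applied in the correct cyclic order for each tetrahedron, and that the exponents match the entries of $I$. Since the contradiction comes out so cleanly, a sign or order slip would probably show up as a nontrivial solution, so I will double-check the $b$-row computation. As a sanity check, with the correct conventions the product of all three edge equations should be automatically satisfied, and that is worth verifying too.
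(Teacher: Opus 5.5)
Your proof is correct and follows the same approach as the paper. You parametrize by $\zeta(x_1)$ and $\zeta(y_1)$, use the $a$-equation to get $t=1/s$, and substitute into the $b$-equation to force $s=1$; the paper additionally notes that the $c$-equation reduces to the same conclusion, which you rightly observe is not needed.
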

\begin{proof}
Suppose that such a shape assignment $\zeta$ exists, and put
$z=\zeta(x_1)$ and $w=\zeta(y_1)$. Here $x_i$ and $y_i$ are the
quadrilateral types in $A$ and $B$ defined in
Section~\ref{sec:quadrilaterals}. In particular,
$z,w\in\C\setminus\{0,1\}$, and
\[
\begin{aligned}
\bigl(\zeta(x_1),\zeta(x_2),\zeta(x_3)\bigr)
&=\left(z,\frac{1}{1-z},\frac{z-1}{z}\right),\\
\bigl(\zeta(y_1),\zeta(y_2),\zeta(y_3)\bigr)
&=\left(w,\frac{1}{1-w},\frac{w-1}{w}\right).
\end{aligned}
\]
The edge equations at $a,b,c$, read from the incidence matrix in
Section~\ref{sec:quadrilaterals}, are
\begin{align*}
zw&=1,\\
\left(\frac{1}{1-z}\right)^2w
\left(\frac{w-1}{w}\right)^2&=1,\\
z\left(\frac{z-1}{z}\right)^2
\left(\frac{1}{1-w}\right)^2&=1.
\end{align*}
The first gives $w=z^{-1}$. Substituting into the last two reduces
their left-hand sides to $z^{-1}$ and $z$, respectively. They force
$z=1$, which is forbidden. Hence no solution exists.
\end{proof}

\begin{remark}\label{rmk:generalized-thurston}
Luo's generalized Thurston equation \cite[Section 3.2]{Luo2011}
allows edge products $\pm1$. Taking $z=w=-1$ gives the shape triple
$(-1,\tfrac12,2)$ in each tetrahedron, with edge products $(1,-1,-1)$
at $(a,b,c)$. Thus the generalized equation has a solution, and the
example does not contradict \cite[Theorem 1.2]{Luo2011}.
\end{remark}

\begin{remark}\label{rmk:thurston-theorem}
Proposition~\ref{prop:thurston} also follows from the
Segerman--Tillmann theorem \cite[Theorem 1.1]{Luo2011}, since
$a$ is a null-homotopic loop edge by
Section~\ref{sec:triangulation}.
\end{remark}

\subsection{Circle-valued angle structures and volume}\label{subsec:volume}
A \emph{circle-valued angle structure} is a map
$u\colon Q(\T)\to S^1$, where $S^1\subset\C$ is the unit circle,
satisfying
\[
\begin{aligned}
\prod_{q\subset\sigma}u(q)&=-1&&\text{for every tetrahedron }\sigma,\\
\prod_{q\in Q(\T)}u(q)^{i(e,q)}&=1&&\text{for every edge }e.
\end{aligned}
\]
Their space is denoted by $\SAS(\T)$. It is nonempty by
\cite[Proposition 4.1]{Luo2011} and compact, being a closed
subset of $(S^1)^6$. Using the Lobachevsky function
$\Lambda$, the volume is defined by
\[
\vol(u)=\sum_{q\in Q(\T)}\Lambda(\arg u(q)),
\qquad
\Lambda(\theta)=-\int_0^\theta\log|2\sin t|\,dt.
\]
Here $\arg u(q)$ is any real $\theta$ with $u(q)=e^{i\theta}$.
The function $\Lambda$ is odd and $\pi$-periodic; the periodicity
makes the sum independent of these choices.
The volume function is continuous, so it attains a
maximum. Following Luo's terminology, a point $u$ is \emph{smooth}
if $u(q)\notin\{1,-1\}$ for every $q\in Q(\T)$.

\begin{proposition}\label{prop:volume}
Every maximum of $\vol\colon\SAS(\T)\to\R$ is smooth.
\end{proposition}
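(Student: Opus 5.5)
The plan is to make $\SAS(\T)$ completely explicit and then exclude non-smooth maxima by a combination of Luo's local analysis near degenerate points and a direct comparison of volumes. Write $u(x_j)=e^{i\alpha_j}$ and $u(y_j)=e^{i\beta_j}$. Using the matrix $I$ from Section~\ref{sec:quadrilaterals}, the defining conditions of $\SAS(\T)$ become the following congruences modulo $2\pi$:
\[
\alpha_1+\alpha_2+\alpha_3\equiv\pi,\qquad
\beta_1+\beta_2+\beta_3\equiv\pi,\qquad
\alpha_1+\beta_1\equiv0,
\]
\[
2\alpha_2+\beta_1+2\beta_3\equiv0,\qquad
\alpha_1+2\alpha_3+2\beta_2\equiv0.
\]
I will solve this system, eliminating $\beta_1=-\alpha_1$ and the third angle in each tetrahedron. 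This should present $\SAS(\T)$ as a finite union of two-dimensional tori (the factors $2$ create finitely many components). On each component I will write $\vol$ as an explicit sum of Lobachevsky functions of two free parameters.

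\textbf{Step 1: a non-smooth maximum forces a fully degenerate tetrahedron.} I will invoke the standard local argument of Luo \cite{Luo2011}, or reprove it with the derivative $\Lambda'(\theta)=-\log|2\sin\theta|$, which tends to $+\infty$ as $\theta\to0,\pi$. Suppose at a maximum $u$ some $u(q)\in\{\pm1\}$ while another angle in the same tetrahedron is not in $\{0,\pi\}$. Then I move along a tangent direction of $\SAS(\T)$ that changes the degenerate angle. Such directions correspond to kernel vectors of $W$, and the directional derivative is $+\infty$, contradicting maximality. Hence every tetrahedron containing a degenerate angle has all three angles in $\{0,\pi\}$, and it contributes $0$ to the volume.

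The delicate point is that moving the degenerate angle requires a tangent vector, i.e.\ a vector in $\ker W$, with nonzero entry at $q$. I will check from \eqref{eq:matching} that $\ker W$ has no coordinate identically zero. Its dimension is $6-\operatorname{rank}W=4$, and it contains $(1,0,0,1,0,0)^{\mathsf T}$ together with vectors such as $(1,1,1,0,0,0)^{\mathsf T}$ type combinations, suitably adjusted to satisfy the tetrahedron conditions.

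\textbf{Step 2: enumerate the fully degenerate cases.} I will list the points where $A$, or $B$, or both, has all angles in $\{0,\pi\}$, using the congruences above. If both tetrahedra are degenerate, then $\vol=0$. If exactly one is, say $A$, then $\alpha_1\in\{0,\pi\}$ fixes $\beta_1$, and the edge equations pin $\beta_2,\beta_3$ to a finite set determined by $2\beta_2\equiv-\alpha_1-2\alpha_3$. The volume is then a finite list of numbers $\Lambda(\beta_2)+\Lambda(\beta_3)+\Lambda(\beta_1)$, which I compute explicitly. The symmetric case, with $B$ degenerate, is handled the same way.

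\textbf{Step 3: exhibit a strictly better smooth point.} I will find an explicit smooth point of $\SAS(\T)$ whose volume strictly exceeds every value found in Step 2. The naive family $\alpha_1=\beta_1=\pi$ gives volume $0$, so I need a genuinely two-parameter choice. I will take one with $\alpha_1$ near $\pi/2$ and solve the remaining congruences. I expect this comparison to be the main obstacle. It needs a numerical inequality between sums of $\Lambda$-values, such as $\Lambda(\pi/3)$ and $\Lambda(\pi/6)$, so I must choose the smooth test point carefully, probably by locating a critical point of $\vol$ on the relevant torus. If a clean choice is hard to find, the fallback is a one-sided perturbation from the best degenerate point of Step 2. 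Since $\Lambda'=+\infty$ there in an admissible direction, the perturbation already increases volume, and this shows that such a point is never a maximum.
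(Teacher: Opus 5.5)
Your Step~1 fails in exactly this triangulation, and Steps~2 and~3 rest on it. The edge equation at $a$ is $\beta_1\equiv-\alpha_1$. So $u(x_1)=\pm1$ forces $u(y_1)=\pm1$, and since $\Lambda$ is odd and $\pi$-periodic, $\Lambda(\alpha_1)+\Lambda(\beta_1)$ vanishes identically on $\SAS(\T)$. The two singular terms cancel, and no direction has derivative $+\infty$. Explicitly, put $p=\alpha_2$ and $r=\beta_2$. Then $\SAS(\T)$ is the single torus $\alpha_1=2(r-p)$, $\alpha_3=\pi+p-2r$, $\beta_1=2(p-r)$, $\beta_3=\pi+r-2p$, and
\[
\vol=\Lambda(p)+\Lambda(r)+\Lambda(p-2r)+\Lambda(r-2p).
\]
This is real-analytic across the curve $2(r-p)\equiv0\pmod\pi$. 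On that curve both tetrahedra have exactly one degenerate angle, at $x_1$ and $y_1$; for example $p=\pi/6$, $r=2\pi/3$. These points are not excluded by your derivative argument and are not listed in Step~2, so your case analysis has a hole.

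There are two related errors. First, the tangent space is the $2$-dimensional kernel of the tetrahedron and edge equations. It equals $\Omega(\ker W)$, not $\ker W$. For instance, $(1,0,0,1,0,0)^{\mathsf T}\in\ker W$ maps to the tangent vector $(0,-1,1,0,-1,1)^{\mathsf T}$, whose $x_1$-entry is $0$. So inspecting coordinates of $\ker W$ is the wrong test. Second, your Step~3 fallback meets the same cancellation inside a fully degenerate tetrahedron, where $\sum_{q\subset\sigma}v(q)=0$.

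There are two ways to close the gap. One is to cite Luo's Theorem 4.4 (Futer--Gu\'eritaud) in its actual form: it says only that \emph{some} non-smooth maximum has each tetrahedron either fully degenerate or fully nondegenerate. That suffices here, because a fully degenerate $A$ or $B$ forces $\alpha_1,\beta_1\in\{0,\pi\}$, and hence volume $0$.

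The simpler way avoids Steps~1--2 altogether. Any tetrahedron with one angle in $\{0,\pi\}$ contributes $0$, since the other two angles then satisfy $\theta_3\equiv-\theta_2\pmod\pi$. So a non-smooth point has volume at most the one-tetrahedron maximum $3\Lambda(\pi/3)$. To see this bound, reduce the angles mod $\pi$ into $[0,\pi)$; their sum is $0$, $\pi$, or $2\pi$, and the last case gives nonpositive volume after $\theta\mapsto\pi-\theta$. The smooth point $p=\pi/6$, $r=\pi/2$ has angles $(2\pi/3,\pi/6,\pi/6)$ and $(4\pi/3,\pi/2,7\pi/6)$. Its volume is $3\Lambda(\pi/6)=\tfrac{9}{2}\Lambda(\pi/3)$, using $\Lambda(2\theta)=2\Lambda(\theta)+2\Lambda(\theta+\pi/2)$, so it beats every non-smooth point.

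The paper never computes $\vol$. It combines Futer--Gu\'eritaud with Luo's Theorem 4.2(a) and Lemma~\ref{lem:support}, deriving (iv) from the absence of clusters in (ii). The repaired version of your route is elementary and self-contained, needing neither cited theorem, but it is tied to this specific example.
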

\begin{proof}
Suppose that a non-smooth maximum exists. By the Futer--Gu\'eritaud
theorem, as stated in \cite[Theorem 4.4]{Luo2011}, we may choose a
non-smooth maximum $\widetilde u$ such that, in each tetrahedron,
if one quadrilateral parameter belongs to $\{1,-1\}$, then all
three do. Since $\widetilde u$ is non-smooth, there is a tetrahedron
$\sigma$ whose three quadrilateral types $q_1,q_2,q_3$ satisfy
$\widetilde u(q_i)\in\{1,-1\}$ for $i=1,2,3$.
By \cite[Theorem 4.2(a)]{Luo2011}, each quadrilateral taking the value
$\pm1$ at such a maximum has a nonzero coordinate in some
2-quad-type solution of Haken's equations. Applying this result to
$q_1,q_2,q_3$ gives solutions $z_1,z_2,z_3$ with $z_i(q_i)\ne0$.
These form a cluster, contradicting
Lemma~\ref{lem:support}. Hence every maximum is smooth.
\end{proof}

\begin{proof}[Proof of Theorem~\ref{thm:main}]
Section~\ref{sec:triangulation} constructs $\T$ as a minimal
triangulation of $L(3,1)$ and proves (i);
Lemma~\ref{lem:support} proves (ii); and
Propositions~\ref{prop:thurston} and~\ref{prop:volume} prove
(iii) and (iv). The manifold $L(3,1)$ is closed, orientable, and
irreducible, so all the required hypotheses hold. Now (ii)--(iii)
refute (C1), (iii)--(iv) refute (C2), and (i)--(ii) refute (C3).
\end{proof}

\section*{Declaration on the use of AI}
The author acknowledges the use of ChatGPT (OpenAI) for providing
the idea for the counterexample and for assistance with
English-language polishing. All content of this paper has been
carefully reviewed and verified by the author. The author takes full
responsibility for the entire manuscript, including its mathematical
arguments, calculations, conclusions, and any errors or omissions.


\begin{thebibliography}{3}
\bibitem{JRT}
W.~Jaco, J.~H.~Rubinstein and S.~Tillmann,
\emph{Minimal triangulations for an infinite family of lens spaces},
J. Topol. \textbf{2} (2009), no.~1, 157--180.

\bibitem{Luo2011}
F.~Luo, \emph{Triangulated 3-manifolds: from Haken's normal
surfaces to Thurston's algebraic equation}, in
\emph{Interactions between Hyperbolic Geometry, Quantum Topology
and Number Theory}, Contemp. Math. \textbf{541}, Amer. Math. Soc.,
Providence, RI, 2011, 183--204.

\bibitem{Tollefson}
J.~L.~Tollefson, \emph{Normal surface $Q$-theory}, Pacific J. Math.
\textbf{183} (1998), no.~2, 359--374.
\end{thebibliography}
\end{document}